\DeclareMathAlphabet{\mathpzc}{OT1}{pzc}{m}{it}
\theoremstyle{plain} 
\newtheorem{theorem}{Theorem}[section] 
\newtheorem{corollary}[theorem]{Corollary}
\newtheorem{problem}[theorem]{Problem}
\theoremstyle{plain}
\theoremstyle{definition}
\newtheorem{definition}[theorem]{Definition}
\newtheorem{example}[theorem]{Example}
\theoremstyle{remark}
\newcommand{\nnatural}{\mathbb{N}}
\renewcommand{\restriction}{\mathord{\upharpoonright}}
\newcommand{\cA}{{\mathcal A}}
\newcommand{\cB}{{\mathcal B}}
\newcommand{\cI}{{\mathcal I}}
\newcommand{\cP}{{\mathcal P}}
\newcommand{\oo}{\omega}
\newcommand{\ee}{\varepsilon}
\newcommand{\MaxLen}{{\mathrm MaxLen}}
\newcommand{\ThickPlus}{${\mathit Thick}^+$}
\newcommand{\StronglyPorous}{\mathcal{SP}}
\def\timenow{\@hour=\time \divide\@hour by 60
\number\@hour:
  \multiply\@hour by 60 \@minute=\time
  \global\advance\@minute by -\@hour
  \ifnum\@minute<10 0\number\@minute\else
  \number\@minute\fi}
\def\ctimenow{\hfil{\tt \jobname.tex, \today~Time: \timenow }\hfil}
      \let\@oddfoot\ctimenow\let\@evenfoot\ctimenow
\author{Pawe\l{} Klinga}
\address[]{
Pawe\l{} Klinga\\
University of Gda\'nsk \\
Institute of Mathematics \\
Wita Stwosza 57,
80--952 Gda\'nsk, Poland \\
}
\email[P.~Klinga]{pawel.klinga@ug.edu.pl}
\author{Andrzej Nowik}
\address[]{
Andrzej Nowik\\
University of Gda\'nsk \\
Institute of Mathematics \\
Wita Stwosza 57,
80--952 Gda\'nsk, Poland \\
}
\email[A.~Nowik]{andrzej.nowik@ug.edu.pl}
\author{Anna W\k{a}sik}
\address[]{
Anna W\k{a}sik\\
University of Gda\'nsk \\
Institute of Mathematics \\
Wita Stwosza 57,
80--952 Gda\'nsk, Poland \\
}
\email[A. W\k{a}sik]{anna.wasik@phdstud.ug.edu.pl}
\begin{document}
\title[
    $\sigma$-Porosity...
]{
    $\sigma$-Porosity of Certain Ideals
}
\keywords{ideals, $\sigma$-porous sets, Van der Waerden ideal, piecewise syndetic sets}

\subjclass[2020]{03E15, 40A05}

\begin{abstract}
We investigate the $\sigma$-porosity of certain known ideals of subsets of natural numbers. Porosity is a notion of smallness in metric spaces that is stronger than nowhere density. Analogously, $\sigma$-porosity is a strengthening of meagerness. In this paper, we verify which ideals are $\sigma$-porous.
\end{abstract}
\maketitle

\section{Preliminaries}
\medskip

\subsection{Notation}
By $\omega$ we understand the set of naturals starting from 0 and $2^\omega$ its powerset. We identify each subset of $\omega$ with a binary sequence through the characteristic function. Fix $s \in 2^{<\omega}$ (i.e. a finite binary sequence). Then $[s] = \{x\in 2^\omega\colon s \subseteq x\}$ is the element of base in the product topology of $2^\omega$. The space $2^\omega$ itself will be referred to as the Cantor space.

Let $t$ be an element of $2^\omega$ and $j\in \omega$. By $t \restriction_{j}$ we mean the restriction of the sequence $t$ to $j = \lbrace 0, \ldots, j-1\rbrace$. If $s \in 2^{<\omega}$ then by  $s \frown t$ we denote the concatenation of $s$ followed by $t$. For $c \in \{0,1\}$ and $l \in \omega$, by $c^l$ we  denote a constant sequence $u \in 2^\omega$ such that $u_i =c$ for $i=0, 1,... , l-1$.

\medskip

\subsection{Ideals} \label{subsection} An ideal is a nonempty family $\cI \subset 2^{\omega}$ closed under taking subsets and finite unions, i.e.
\begin{enumerate}
	\item $\emptyset \in \cI$
	\item $\forall_{A,B}\:\:A \in \cI \land B\subset A \implies B \in \cI$
	\item $\forall_{A,B}\:\:A \in \cI \land B\in \cI \implies A\cup B \in \cI$
\end{enumerate}
An ideal is proper if $\oo\notin \cI$, or, equivalently, $\cI \neq 2^\omega$. Unless stated otherwise, we assume all ideals are proper. This implies a certain intuition, that an ideal is a family of \textit{small} subsets of $\oo$, in some sense.

A filter is a dual concept, i.e. it is closed under taking supersets and finite intersections. Therefore we understand a filter as a family of \textit{large} subsets of $\oo$. Additionally, a maximal (under inclusion) filter is called an ultrafilter. Finally, we say that an ideal is a maximal ideal if its dual filter, i.e. $\lbrace \omega\setminus A\colon A \in \cI\rbrace$, is an ultrafilter.

Let us proceed to specific ideals.

\begin{definition} Let $(a_n)_{n\in\oo}$ be a sequence of positive reals such that $\sum_{n\in \oo} a_n = \infty$. Then
$$ \cI_{(a_n)} = \left\{ A \subset \omega \colon \sum_{n\in A} a_n < \infty \right\} $$
is its corresponding summable ideal.
\end{definition}
Recall that an ideal is tall if every infinite subset of $\omega$ contains an infinite subset belonging to the ideal. As it turns out, summable ideals are tall if and only if $a_n\to 0$. In our paper we assume all summable ideals are tall.

\begin{definition}
Let $E_n$ denote the family of subsets of naturals which do not contain any arithmetic sequence of length $n$. Put
$$\mathcal{W} = \bigcup_{n} E_n.$$
$\mathcal{W}$ is an ideal called the van der Waerden ideal.
\end{definition}

	Let us consider certain notions of largeness, namely \textit{thickness}, \textit{syndecity} and \textit{piecewise syndecity}. They have been discussed in numerous papers concering ergodic Ramsey theory and topology, for instance \cite{beige},  \cite{BE}, \cite{BE1}, \cite{ZU}.
	 
\begin{definition}
	Let $T \subseteq \omega$. We say that $T$ is thick if
	$$\forall_{p \in \omega} \exists_{n \in \omega} \quad \{n, n+1, n+2, ..., n+p \} \subset T. $$
\end{definition}

\noindent In other words, $T$ is thick if it contains arbitarily long intervals. We will denote the family of thick sets as $Thick$.

\begin{definition}
A set $S \subseteq \omega$  is syndetic if
$$ \exists_{p \in \omega} \forall_{a \in \omega} \quad \{a, a+1, a+2, ..., a+p\} \cap S \neq \emptyset. $$
\end{definition}

\noindent Intuitively, a syndetic set  can be thought of as an interval with gaps, where each gap is of length bounded by a constant $p$. The family of syndetic sets is denoted by $\mathcal{S}$. Observe that a set $T$ is thick if and only if $T \cap S \neq \emptyset $ for all syndetic sets $S$.

\begin{definition} We say that set $P$ is piecewise syndetic if
$$ \exists_{T \in Thick \atop S \in \mathcal{S}} \quad  P=T \cap S.$$
\end{definition}
\noindent Equivalently, $P$ is piecewise syndetic if it consists of arbitrarily long intervals with gaps, where the gaps are of length bounded by some constant $p$. The family of piecewise syndetic sets will be denoted by $\mathcal{PS}$.

While the aforementioned definitions relate to \textit{large} sets, we can consider their \textit{small} counterparts by taking the complements of families.

By \ThickPlus we understand the family of sets which are not thick. We follow the usual notation of $\cI^+ = \omega \setminus \cI$. Notice that \ThickPlus is not an ideal. To observe this fact, consider sets of even and odd numbers.

On the other hand, it is easy to verify that the complement of $\mathcal{PS}$ forms an ideal, which we refer to as the Brown ideal, denoted as $\mathcal{B}$. This name is derived from Brown's lemma which states that whenever we partition a piecewise syndetic set into finitely many subsets, at least one of those subsets must also be piecewise syndetic. It was established by Brown \cite{Br} in the context of locally finite semigroups.

Obviously, any thick set is also piecewise syndetic. Furthermore, it is a well-known fact that any piecewise syndetic set contains arbitrarily long arithmetic sequences. Using contraposition, we can establish the following inclusions: $\mathcal{W} \subset \mathcal{B} \subset$ \ThickPlus.

Finally, we define the ideal $\mathcal{I}_u$. 

\begin{definition}
Let $A \subset \omega$ and $n \in \omega\setminus\lbrace 0 \rbrace$. Put 
$$S_n(A) = \max_{m \in \omega}| A \cap \{m, m+1, ..., m+n\}|,  $$
\noindent where $|X|$ means the cardinality of a set $X$. It is known that the limit $\bar{u}(A) =  \lim_{n \to \infty} \frac{S_n(A)}{n}$ exists. The family
$$\mathcal{I}_u = \{A \subset \omega: \bar{u}(A) = 0 \} $$
\noindent is called the ideal of uniform density zero sets.

	For more details about the ideal of uniform density zero sets, see \cite{BFMS}. It can be easily proven that $\mathcal{I}_u \subset$ \ThickPlus.

\end{definition} 
\subsection{Porosity} Porosity is a metric concept of smallness, similar to nowhere density. The concept is stronger (i.e. the family of porous sets is coarser), but unlike nowhere density, it requires a metric space $X$. Porous sets were introduced in 1967 by Dol\v{z}enko \cite{D} and developed by Zaj\'{i}\v{c}ek \cite{Z1}. Intuitively, porostity  is associated with the size of holes in the set.
 
\begin{definition}
\label{definition:porocity}
We say that a set $E \subseteq X$ is porous if
$$\exists_{0 < \alpha < 1 \atop \ee_0 > 0} 
\forall_{0 < \ee \leq \ee_0 \atop x\in X} \exists_{y \in X} \quad
B(y, \alpha \cdot \ee) \subseteq B(x, \ee) \setminus E.$$
\end{definition}

A set is called $\sigma$-porous if it is a countable union of porous sets. 

Notice that the collection of $\sigma$-porous sets on the real line forms a
$\sigma$-ideal strictly contained in the intersection of the $\sigma$-ideal of meager sets and 
the $\sigma$-ideal of Lebesgue null sets.

  The notion of $\sigma$-porosity has been discussed in multiple papers, 
see for instance \cite{Z2} and \cite{PZ}. Notice that in \cite{Z2} the author considers two kinds of porosity: $\sigma$-upper (weaker property) and $\sigma$-lower porosity (stronger property) and by Proposition 2.2 from \cite{Z2} the Definition \ref{definition:porocity} is in fact equivalent to $\sigma$-lower porosity.

As we consider ideals defined on $2^\omega$, we will use the following metric (in fact an ultrametric):

\medskip
 
$$\rho(s, t) = \begin{cases}
     2^{-\min\lbrace i \in \omega\colon s(i) \not= t(i)\rbrace} & \text{if }s \not= t,\\
     0      & \text{if }s = t,
\end{cases}$$

\noindent where $s_n$ and $t_n$ are binary sequences.

We will be examining the $\sigma$-porosity of certain ideals in the sense of product topology which is induced by $\rho$.

In the final section of the paper we will step away from ideals and discuss another definition of porostity -- a classical approach to this concept which we will translate onto the Cantor space $2^\omega$.

Notice that the notion of porosity from the Definition \ref{definition:porocity}
can be reformulated in the combinatorial way (another but similar look on this topic can be
found in \cite{DBLP:journals/jla/GonzalezHM17}):
\begin{definition}\label{very-porous-warunek-kombinatoryczny}
A set $E\subseteq 2^\omega$ is porous if 
  $$\exists_{\substack{M \in \omega\\ K \in\omega}} \forall_{\substack{t\in 2^{<\omega}\\ |t| > M}}
  \exists_{s \supseteq t} |s| \leq |t| + K \wedge E \cap [s] = \emptyset.$$
\end{definition}

\begin{definition}\label{df:n-porous-sets}
Following \cite{DBLP:journals/jla/GonzalezHM17} we say that a set $A \subseteq 2^\omega$
is $n$-porous if for every $s \in 2^{<\omega}$ there is a $t \in 2^{<\omega}$,
$|t| = n$ such that $[s\frown t]\cap A = \emptyset$. By $\StronglyPorous_n$
we denote the $\sigma$-ideal generated by $n$-porous sets.
\end{definition}

The notation $\StronglyPorous_n$ comes from ``strongly porous'' --  indeed, by \cite{DBLP:journals/jla/GonzalezHM17} it is a stronger notion, i.e. every $\StronglyPorous_n$ set is $\sigma$-porous. The reverse inclusion does not hold.
\section{Summable ideals}

\begin{theorem}
	No summable ideal $\cI_{(a_n)}$ is a $\sigma$-porous subset of $2^\omega$.
\end{theorem}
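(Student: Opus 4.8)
The plan is to show that a summable ideal $\cI_{(a_n)}$ cannot be $\sigma$-porous by a Baire-category style argument combined with the specific structure of summable ideals: any porous set, when restricted to a suitable basic clopen set, fails to contain "most" finite modifications of a given element, whereas $\cI_{(a_n)}$ is invariant under finite modifications and contains enough "thin" sets. More precisely, suppose toward a contradiction that $\cI_{(a_n)} = \bigcup_k E_k$ with each $E_k$ porous. I would first observe that $\cI_{(a_n)}$ contains every finite set and, more importantly, is closed under finite symmetric differences; also it is dense in $2^\omega$ in the sense that every basic clopen set $[s]$ contains a point of $\cI_{(a_n)}$ (e.g. $s$ followed by all zeros). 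So $\cI_{(a_n)}$ is a dense set; the point is to exploit that it is actually "large enough" to meet every porosity obstruction.

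The core step uses the combinatorial reformulation of porosity (Definition~\ref{very-porous-warunek-kombinatoryczny}): if $E_k$ is porous with parameters $M_k, K_k$, then for every long enough $t$ there is an extension $s \supseteq t$ with $|s| \le |t| + K_k$ and $[s] \cap E_k = \emptyset$. I would build, by a fusion/diagonalization argument, a single point $x \in 2^\omega$ that simultaneously avoids all the $E_k$ while still lying in $\cI_{(a_n)}$. The construction is recursive: having committed to a finite initial segment $t$, pass $E_{k}$ one at a time, each time using porosity to extend the current stub by at most $K_k$ bits into a clopen set disjoint from $E_k$; crucially, between these "avoidance" blocks I insert long runs of $0$'s. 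Since $\sum a_n = \infty$ but we only ever put finitely many $1$'s in each avoidance block (at most $K_k$ of them) and can make the intervening zero-runs as long as we like, the total sum $\sum_{n \in x} a_n$ can be kept finite — here I use that $a_n \to 0$ (tallness assumption) so that the contribution of each avoidance block can be bounded by, say, $2^{-k}$ by pushing that block far enough to the right. Thus $x \in \cI_{(a_n)}$, but $x \notin E_k$ for any $k$, contradicting $\cI_{(a_n)} = \bigcup_k E_k$.

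The one subtlety to handle carefully is the interleaving: each porosity condition only guarantees an extension once $|t| > M_k$, so when I return to "re-avoid" $E_k$ infinitely often (or just once — once suffices since $x \notin [s] \cap E_k = \emptyset$ already forces $x \notin E_k$ permanently, as $[s]$ is clopen), I must make sure the stub is already long enough; this is arranged automatically because the stub only grows. So in fact a single pass through $E_1, E_2, \dots$ suffices: at stage $k$, append enough $0$'s to make the length exceed $M_k$ and to make $\sum$ of the $a_n$-values over the next $K_k$ positions less than $2^{-k}$ (possible since $a_n \to 0$), then apply porosity of $E_k$ to append $\le K_k$ more bits landing in a clopen box disjoint from $E_k$. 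The resulting $x$ has $\sum_{n\in x} a_n \le \sum_k 2^{-k} < \infty$.

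The main obstacle — and the place where the summability hypothesis is essential — is precisely controlling the $a_n$-mass of the bits forced by the porosity extensions: a priori those $K_k$ forced bits could be $1$'s sitting at positions with large $a_n$. This is defeated by the tallness assumption $a_n \to 0$, which lets us delay each avoidance block until the relevant $a_n$ are as small as we need. Without $a_n \to 0$ the summable ideal need not be tall and the argument (and indeed the statement, for the non-tall normalization) would require reconsideration, but under the paper's standing assumption that all summable ideals are tall, the construction goes through and gives the contradiction.
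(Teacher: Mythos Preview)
Your proposal is correct and follows essentially the same fusion/diagonalization strategy as the paper: build $x$ stage by stage, first padding with zeros far enough that the next block of $a_n$-values has total mass below $2^{-k}$, then invoking porosity of $E_k$ to extend into a clopen set disjoint from $E_k$; the resulting $x$ lies in $\cI_{(a_n)}$ but in no $E_k$. The only cosmetic difference is that you work with the combinatorial reformulation of porosity (Definition~\ref{very-porous-warunek-kombinatoryczny}), whereas the paper carries out the identical construction directly in the metric language of balls and parameters $\alpha_n,\varepsilon_n$.
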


\begin{proof}
	Let us assume a contrario that there exists a summable ideal $\cI_{(a_n)}$, where $a_n \geqslant 0$, $\sum_{n\in\omega} a_n = \infty$ and $a_n \to 0$, that is a $\sigma$-porous set. Therefore it can be represented as the following union:
	$$\cI_{(a_n)} = \bigcup_{n\in\omega} F_n,$$
	where each $F_n$ is porous. This means that for each $n\in\omega$ there exist $\alpha_n \in (0, 1)$ and $\varepsilon_n\in (0,1)$ such that if $0 < \varepsilon < \varepsilon_n$ and $x_0 \in 2^\omega$, then there exists $y_0$ such that
	$$B(y_0, \alpha_n \varepsilon) \subseteq B(x_0, \varepsilon) \setminus F_n.$$
	
	We shall construct inductively a sequence $(s_n)$ such that $s_n \in 2^{<\omega}$ and $s_n \subseteq s_{n+1}$, as well as two companion sequences $(M_n), (N_n)$ of naturals. For the completeness of the construction we put $s_{-1} = \emptyset$ and
	$M_{-1} = 0$. Since $\lim_{n\to\infty} a_n = 0$ we can pick
        $N_n < M_n$ such that the following conditions hold:
	\begin{enumerate}
		\item
		$N_n > \max\lbrace n, M_{n-1}, \log_2(\frac{1}{\varepsilon_n}) \rbrace$;
		\item
		$M_n - N_n > \log_2(\frac{1}{\alpha_n})$;
		\item
		$\sum_{i=N_n}^{M_n} a_i < \frac{1}{2^n}$;
	\end{enumerate}
	
	We define $x_0^{(n)} = s_{n-1} \frown 0^\infty$ and	$\varepsilon_n^\prime = 2^{-N_n}$. Since $N_n > \log_2(\frac{1}{\varepsilon_n})$, then $-N_n < \log_2(\varepsilon_n)$, hence
	$-\log_2(2^{N_n}) < \log_2(\varepsilon_n)$, therefore $\varepsilon_n^\prime < \varepsilon_n$. Out of our assumption of porosity, it implies that there exists $y_0^{(n)}$ such that
	$$B(y^{(n)}_0, \varepsilon_n^\prime \alpha_n) \subseteq B(x^{(n)}_0, \varepsilon_n^\prime) \setminus F_n.$$
	Since $\rho(y^{(n)}_0, x^{(n)}_0) < \varepsilon_n^\prime$, then $x^{(n)}_0 \restriction_{N_n} = y^{(n)}_0 \restriction_{N_n}$. We will now check that
	$$[y^{(n)}_0 \restriction_{M_n}] \subseteq B(y^{(n)}_0, \varepsilon_n^\prime \alpha_n).$$
	It is true that $M_n - N_n > \log_2(\frac{1}{\alpha_n})$, hence	$N_n - M_n < \log_2(\alpha_n)$ and then $2^{N_n - M_n} < \alpha_n$. Therefore $2^{-M_n} < 2^{-N_n} \cdot \alpha_n = \varepsilon_n^\prime \alpha_n$. Fix $a \in [y^{(n)}_0 \restriction_{M_n}]$. We have that $a \restriction_{M_n} = y^{(n)}_0 \restriction_{M_n}$, so $\rho(a, y^{(n)}_0) \leq 2^{-M_n} < \varepsilon_n^\prime \alpha_n$ and therefore $a \in B(y^{(n)}_0, \varepsilon_n^\prime \alpha_n)$, which proves the inclusion.
	
	Let us put $s_n = y^{(n)}_0 \restriction_{M_n}$. We have that $s_{n-1} \subseteq x^{(n)}_0$, 
	$x^{(n)}_0 \restriction_{N_n} = y^{(n)}_0 \restriction_{N_n}$ and $s_n = y^{(n)}_0 \restriction_{M_n}$. At the same time $|s_{n-1}| < N_n < M_n$, hence
	$s_{n-1} \subseteq s_n$. It is also true that $N_n > n$, hence $|s_n| \to \infty$. We define
	$z = \bigcup_{n\in\omega} s_n$. We have	$z \restriction_{M_n} = y^{(n)}_0 \restriction_{M_n}$ and
	$$[y^{(n)}_0 \restriction_{M_n}] \subseteq B(y^{(n)}_0, \varepsilon_n^\prime \alpha_n) \subseteq B(x^{(n)}_0, \varepsilon_n^\prime) \setminus F_n.$$
	Therefore $z\not\in \bigcup_{n\in\omega} F_n = \cI_{(a_n)}$.
	
	We will now show that $\sum_{z(i) = 1} a_i < \infty$, which will be the desired contradiction. Let us take a closer look at the placement of the crucial numbers:
	\begin{align*}
		& M_{-1} = |s_{-1}| = 0 < N_0 < M_0 = |s_0| < \\
		<\:\:&  N_1 < M_1 = |s_1| < N_2 < M_2 = |s_2| < \dots
	\end{align*}
	
	Let us notice that for any $n > 0$ we have $z\restriction_{(M_n, N_{n+1})} = 0 \restriction_{(M_n, N_{n+1})}$. It follows from the fact that
	$$x^{(n+1)}_0 \restriction_{N_{n + 1}} = y^{(n+1)}_0 \restriction_{N_{n + 1}} \subseteq 
	y^{(n+1)}_0 \restriction_{M_{n + 1}} = s_{n+1} \subseteq z,$$
	and since (following the definition of $x^{(n+1)}_0$) it is true that $x^{(n+1)}_0 \restriction_{[|s_n|, \infty)} = 0 \restriction_{[|s_n|, \infty)}$ (and $|s_n| = M_n$), then ultimately we obtain indeed $z\restriction_{(M_n, N_{n+1})} = 0 \restriction_{(M_n, N_{n+1})}$.
	Hence
	$$\sum_{i > M_0 \atop z(i) = 1} a_i \leqslant \sum_{n = 1}^{\infty} \sum_{i \in [N_n, M_n]} a_i 
	\leqslant \sum_{n = 1}^{\infty} \frac{1}{2^n} < \infty,$$
	which proves that $z \in \cI_{(a_n)}$, a contradiction.
\end{proof}

\section{Van der Waerden ideal}

\begin{theorem} \label{thm:W-is-sigma-porous}
	$\mathcal{W}$ is a $\sigma$-porous subset of $2^\omega$.
\end{theorem}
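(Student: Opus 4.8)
The plan is to show that $\mathcal{W} = \bigcup_n E_n$ is $\sigma$-porous by proving that each piece $E_n$ (the family of subsets of $\omega$ containing no arithmetic progression of length $n$) is a porous subset of $2^\omega$. Since $\mathcal{W}$ is already presented as a countable union, it suffices to bound the porosity of each $E_n$ uniformly in the sense of Definition~\ref{very-porous-warunek-kombinatoryczny} (or even the stronger $\StronglyPorous_k$ notion). First I would fix $n$ and work with the combinatorial reformulation: I want constants $M, K \in \omega$ (depending on $n$) such that for every $t \in 2^{<\omega}$ with $|t| > M$ there is an extension $s \supseteq t$ with $|s| \le |t| + K$ and $[s] \cap E_n = \emptyset$. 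The idea is that to escape $E_n$ it is enough to force a length-$n$ arithmetic progression to lie entirely inside the ``$1$''-set coded by $s$.

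The key step is the choice of the progression. Given $t$ of length $\ell = |t| > M$, I look just past the end of $t$: pick a common difference $d$ and a starting point $a \ge \ell$ so that the progression $a, a+d, a+2d, \dots, a+(n-1)d$ consists of coordinates all $\ge \ell$ and all $< \ell + K$ for a fixed $K$; concretely one can take $a = \ell$ and $d = 1$, giving the progression $\ell, \ell+1, \dots, \ell+n-1$, which sits in the interval $[\ell, \ell+n)$. Then define $s \in 2^{<\omega}$ of length $\ell + n$ by $s \supseteq t$, with $s(i) = 1$ for every $i$ in this progression (here all $i \in [\ell, \ell+n)$) and $s(i)=0$ for the remaining new coordinates (there are none in the $d=1$ case, but the bookkeeping matters if one prefers a sparser progression to keep $E_n$ genuinely avoided). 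Every $x \in [s]$ then has $x(\ell) = x(\ell+1) = \cdots = x(\ell+n-1) = 1$, so the set coded by $x$ contains the arithmetic progression $\{\ell, \dots, \ell+n-1\}$ of length $n$, hence $x \notin E_n$. Thus $[s] \cap E_n = \emptyset$ with $|s| = |t| + n$, so $E_n$ is porous with $K = n$ (and $M = 0$); in fact this shows $E_n \in \StronglyPorous_n$.

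Finally, assembling: $\mathcal{W} = \bigcup_{n} E_n$ is a countable union of porous sets, hence $\sigma$-porous, and moreover $\mathcal{W} \in \bigcup_n \StronglyPorous_n$ in the strong sense. I would add a remark that one must double-check $E_n$ is the relevant set: a finite progression of length $n$ is already enough to be excluded from $E_n$ since $E_n$ forbids progressions of length $n$, and forcing finitely many coordinates to be $1$ is exactly what a basic clopen set $[s]$ does. The main (minor) obstacle is purely cosmetic: being careful that when $n$ is small the construction still yields a proper nonempty extension and that the uniform bound $K$ genuinely does not depend on $t$ — which it does not, since the progression is always placed in the first $n$ coordinates beyond $|t|$.
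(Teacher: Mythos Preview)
Your argument is correct and is essentially the paper's proof: both decompose $\mathcal{W}=\bigcup_n E_n$ and kill $E_n$ by appending the block $1^n$ to an arbitrary $t\in 2^{<\omega}$, so that every $x\in[t\frown 1^n]$ contains the progression $\{|t|,\dots,|t|+n-1\}$. The paper phrases this via the metric definition (choosing $\alpha=2^{-(n+2)}$ and $y_0=x_0\restriction(N+1)\frown 1^\infty$), while you use the equivalent combinatorial reformulation from Definition~\ref{very-porous-warunek-kombinatoryczny}; the content is identical, and indeed the paper itself switches to your formulation when proving the analogous statement for \ThickPlus.

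One caveat about your closing remark: from $E_n\in\StronglyPorous_n$ you cannot conclude ``$\mathcal{W}\in\bigcup_n\StronglyPorous_n$''. That statement would mean $\mathcal{W}\in\StronglyPorous_N$ for some single $N$, and the paper's very next result (Theorem~\ref{thm:W-is-not-SP-n}) proves this fails for every $N$. The issue is that $\StronglyPorous_N$ is a $\sigma$-ideal, so it would have to swallow every $E_n$; but for $n>N$ and $s=0^\ell$ no extension of length $N$ can force an arithmetic progression of length $n$, so $E_n$ is not $N$-porous, and the construction in Theorem~\ref{thm:W-is-not-SP-n} shows the obstruction persists even after passing to countable unions. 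So drop that final clause; the rest stands.
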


\begin{proof}
	By $ASq_n$ let us denote the set of all arithmetic sequences of length $n$:
	\[
	ASq_n = \lbrace \lbrace a_1, a_1 + r, \ldots, a_1 + (n - 1) r \rbrace \colon
	a_1 \in \omega \wedge r > 0
	\rbrace.
	\]
	Then
	$$E_n = \lbrace A \subseteq \omega \colon \forall_{F \in ASq_n} F \not\subseteq A \rbrace.$$
	Notice that for each $n$ this set is closed. We will check that it is porous. Fix $n$. Take $\varepsilon_0 = 1$ and define $\alpha = \frac{1}{2^{n+2}}$.
	
	Fix $\varepsilon < \varepsilon_0$ and a ball $B(x_0, \varepsilon)$. Then for any $N\geqslant -\log_2\varepsilon$ we have $[x_0\restriction N] \subseteq B(x_0, \varepsilon)$, since then $\frac{1}{2^N} \leqslant \varepsilon$. Let us take the \textit{first} such $N$, i.e.
	$$ \frac{1}{2^N} \leqslant \varepsilon < \frac{1}{2^{N-1}}.$$
	
	Let us define $y_0 = x_0 \restriction (N + 1) \frown 1^\infty$. We will show that it is true that
	$$B(y_0, \varepsilon \cdot \alpha ) \subseteq B(x_0, \varepsilon) \setminus E_n.$$
	First, let us notice that $B(y_0, \varepsilon \cdot \alpha) \subseteq B(x_0, \varepsilon)$.
	Take $a \in B(y_0, \varepsilon \cdot \alpha)$. We have $\rho(a, x_0) \leqslant \rho(a, y_0) + \rho(y_0, x_0) < \varepsilon \cdot \alpha + \frac{\varepsilon}{2} \leqslant \varepsilon$ (since $\alpha \leqslant \frac{1}{2}$), which proves the inclusion.
	
	Now we will show that $ a \notin E_n $. Notice that
	$$\rho(y_0, a) < \alpha\cdot\varepsilon < \frac{1}{2^{n+2}} \cdot \frac{1}{2^{N-1}} = \frac{1}{2^{N + n + 1}},$$
	therefore $a \restriction (N + 1 +n) = y_0 \restriction (N + 1 + n)$, since if there existed $i < N + 1 + n$ such that $a(i) \not= y_0(i)$ then $i \geqslant \min\lbrace j\colon a(j) \not= y_0(j) \rbrace$ and $\frac{1}{2^{N + n + 1}} \leqslant 2^{-i} \leqslant \rho(a, y_0)$, which would be a contradiction. We also have $a \restriction [N + 1, N + n] = y_0 \restriction [N + 1, N + n] = 1 \restriction [N + 1, N + n] \in ASq_n$. It shows that $a \not\in E_n$.
\end{proof}

Next we show that while $\mathcal{W}$ is $\sigma$-porous, it is not in $\StronglyPorous_n$. This fact places the van der Waerden ideal between two distinct concepts of porosity.

\begin{theorem}\label{thm:W-is-not-SP-n}
	$\mathcal{W}$ is not in the family $\StronglyPorous_n$ for any $n$.
\end{theorem}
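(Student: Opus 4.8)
The plan is to argue by contradiction and use the Baire category theorem to reduce to defeating a single $n$-porous set. Fix $n$ and suppose $\mathcal{W}\in\StronglyPorous_n$; since the $\sigma$-ideal generated by the $n$-porous sets consists precisely of subsets of countable unions of $n$-porous sets, there are $n$-porous $A_0,A_1,\dots$ with $\mathcal{W}\subseteq\bigcup_k A_k$. Put $m=n+2$. Since $E_m\subseteq\mathcal{W}$ and $E_m$ is closed in $2^\omega$, hence a compact metric space and in particular a Baire space, the cover $E_m=\bigcup_k(A_k\cap E_m)$ forces, via the Baire category theorem, some $A_{k_0}\cap E_m$ to be not nowhere dense in $E_m$, and therefore to be dense in $[s]\cap E_m$ for some $s\in 2^{<\omega}$ with $[s]\cap E_m\neq\emptyset$. (Here one uses that subsets of $n$-porous sets are $n$-porous, and that a set dense in an open set is dense in every smaller open set.)

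Next I would record the elementary fact that $[s]\cap E_m\neq\emptyset$ if and only if $\{i<|s|:s(i)=1\}$ contains no arithmetic progression of length $m$ --- if it does, every $x\supseteq s$ contains one, and if it does not, then $s\frown 0^\infty\in E_m$. Fix such an $s$, set $L=|s|+n$ and $u=s\frown 0^{L}$; since $u$ and $s$ have the same ones, $[u]\cap E_m\neq\emptyset$ and $A_{k_0}\cap E_m$ remains dense in $[u]\cap E_m$. Applying the $n$-porosity of $A_{k_0}$ at the node $u$ yields $v\in 2^{n}$ with $[u\frown v]\cap A_{k_0}=\emptyset$. It then suffices to check that $u\frown v\frown 0^\infty\in E_m$: this gives a nonempty open subset $[u\frown v]\cap E_m$ of $[u]\cap E_m$ disjoint from $A_{k_0}$, contradicting density, and since $n$ was arbitrary the theorem follows.

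The heart of the proof, and the only step that is not routine, is verifying that the ones of $u\frown v=s\frown 0^{L}\frown v$ contain no length-$m$ arithmetic progression. Write $F\subseteq[0,|s|)$ for the ones of $s$ (which carry no length-$m$ progression by the choice of $s$) and $G$ for the ones contributed by $v$, which lie in a window $W$ of $n$ consecutive integers located more than $L$ to the right of $F$. Suppose $P$ is a length-$m$ progression in $F\cup G$ with common difference $r$ and let $a=|P\cap G|$, so the top $a$ terms of $P$ lie in $W$ and the bottom $m-a$ in $F$. If $a=0$ then $P\subseteq F$, impossible; if $a=m$ then $m$ terms lie in a window of size $n<m$, impossible; if $2\le a\le m-1$, the $a$ terms in $W$ give $(a-1)r\le n-1$, hence $r\le n-1$, while the consecutive pair of $P$ straddling the gap gives $r\ge L+1\ge n+1$, a contradiction; and if $a=1$, that straddling pair gives $r\ge L+1\ge|s|+1$, whereas the bottom $m-1$ terms of $P$ form a length-$(m-1)$ progression inside $[0,|s|)$, so $(m-2)r\le|s|-1$ and $r\le|s|-1$, again a contradiction (this uses $m=n+2\ge 3$ and the two choices $L\ge n$, $L\ge|s|$). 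I expect the only real difficulty to be keeping this case bookkeeping straight; the Baire-category reduction and the emptiness criterion for $[s]\cap E_m$ are standard.
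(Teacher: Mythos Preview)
Your argument is correct and is a genuinely different route from the paper's. The paper proceeds by a direct diagonalization: assuming $\mathcal{W}\subseteq\bigcup_k A_k$ with each $A_k$ $N$-porous, it builds an increasing sequence $s_0\subseteq s_1\subseteq\cdots$ by, at stage $m$, appending the witness $t(s_m;m)$ that kills $A_m$ and then a long block of zeros, and takes $x_0=\bigcup_m s_m$. One then checks $x_0\in E_{N+1}$ by showing that the ones of $x_0$ live in short blocks of length $\le N$ separated by gaps longer than the preceding prefix, so no triple of an arithmetic progression can straddle two blocks. Your Baire-category reduction replaces this countable diagonalization by a single step: once some $A_{k_0}\cap E_m$ is dense in a basic piece $[s]\cap E_m$, one extension $u\frown v$ suffices for the contradiction. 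The combinatorial core---that a block of at most $n$ ones placed far past the ones of $s$ cannot complete a length-$(n+2)$ progression---is exactly the same lemma the paper uses at each inductive step, and your case split ($a=0$, $a=m$, $2\le a\le m-1$, $a=1$) matches the paper's analysis of how a progression could straddle two blocks. What your approach buys is brevity and a cleaner structure (one application of porosity instead of infinitely many); what the paper's approach buys is an explicit element of $E_{N+1}\setminus\bigcup_k A_k$ and no appeal to Baire category.
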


\begin{proof}
Suppose a contrario that $W \subseteq \StronglyPorous_N$ for some $N$.
Then there exist: a sequence of sets $(A_k)$ such that
$W \subseteq \bigcup_{k = 0}^\infty A_k$ and
for each $s \in 2^{<\omega}$ and $k \in \omega$
a sequence $t(s; k) \in 2^{<\omega}$, $|t(s; k)| \leq N$ such
that $[s \frown t(s; k)] \cap A_k = \emptyset$.

We will construct an increasing sequence of finite binary sequences $(s_m)$ by the following description. Put $s_0 = 0^{N+1}$. Having constructed $s_m$, define $s_m^* = s_m \frown t(s_m, m)$. Then define $M(m) = \max( |s_m^* | + 1, N + 1)$ and put $s_{m+1} = s_m^* \frown 0^{M(m)}$. Next, define $x_0 = \bigcup_{m=0}^\infty s_m$.

It is evident that $x_0 \not\in \bigcup_{k = 0}^\infty A_k$.
Let us check that $x_0 \in W$, in fact, we check that
$\hat{x_0} = \lbrace k \colon x_0(k) = 1\rbrace \in W$.
We will prove that $\hat{x_0}$ does not contain any 
increasing arithmetic sequence of size $N + 1$.

Indeed, suppose by way of contradiction that 
there exist $a_1$ and $r > 0$ such that
$R = \lbrace a_1, a_1 + r, \ldots, a_1 + N \cdot r\rbrace \subseteq \hat{x_0}$. 
We show that the whole $R$ is contained in some
integer interval $[|s_m|, |s_m^*|) \cap \omega$.
It suffices to show that if a triple $\lbrace a, a + r, a + 2r\rbrace$
($r > 0$) is contained in $\bigcup_{m=0}^\infty [|s_m|, |s_m^*|) \cap \omega$,
then it is contained in some $[|s_m|, |s_m^*|) \cap \omega$.
So suppose that $a, a + r \in [0;|s_m^*|) \cap \omega$
and $a + 2r \in [|s_l|, |s_l^*|) \cap \omega$ for some $l > m$.
Then $r \leq |s_m^*|$, thus 
$a + 2r = (a + r) + r \leq |s_m^*| + |s_m^*|$, while
$|s_l| \geq |s_{m+1}| = |s_m^*| + M(m) \geq 2 |s_m^*| + 1$,
which is a constradiction.

On the other hand, suppose that $a \in [0;|s_m^*|) \cap \omega$
and $a + r, a + 2r \in [|s_l|, |s_l^*|) \cap \omega$
for some $l > m$. Then
$r = (a + 2r) - (a + r) \leq |s_l^*| - |s_l| \leq N$,
but $a = (a + r) - r \geq |s_l| - N \geq |s_{m+1}| - N = |s_{m}^*| + M(m) - N$
but $M(m) \geq N$, hence $a \geq |s_{m}^*|$, which is a contradiction
with $a \in [0;|s_m^*|) \cap \omega$.

Hence the whole sequence $R$ is contained in some 
interval $[|s_m|, |s_m^*|) \cap \omega$, which is impossible
since $|[|s_m|, |s_m^*|) \cap \omega| \leq N$.
\end{proof}

Notice that it is possible to strengthen Theorem \ref{thm:W-is-sigma-porous} using similar techniques, resulting in the following.

\begin{theorem}\label{thm:Thick+-is-sigma-porous}
	\ThickPlus is $\sigma$-porous.
\end{theorem}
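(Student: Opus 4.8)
The plan is to mimic the proof of Theorem \ref{thm:W-is-sigma-porous} almost verbatim, replacing the closed sets $E_n$ by the closed sets $F_p$ of subsets of $\omega$ containing no interval of length $p$. Recall that \ThickPlus consists of those $A\subseteq\omega$ that are not thick, i.e. those $A$ for which there is some $p$ such that $A$ contains no block $\{m,m+1,\ldots,m+p\}$. Hence, writing
$$F_p=\lbrace A\subseteq\omega\colon \forall_{m\in\omega}\ \{m,m+1,\ldots,m+p\}\not\subseteq A\rbrace,$$
we have $\text{\ThickPlus}=\bigcup_{p\in\omega}F_p$. So it suffices to show that each $F_p$ is porous, since then \ThickPlus is a countable union of porous sets, hence $\sigma$-porous.

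First I would observe that each $F_p$ is closed (indeed, its complement is the union over $m$ of the clopen sets $\{x\colon x(m)=x(m+1)=\cdots=x(m+p)=1\}$), though as in Theorem \ref{thm:W-is-sigma-porous} closedness is not strictly needed for the porosity argument. Then, fixing $p$, I would set $\varepsilon_0=1$ and $\alpha=\frac{1}{2^{p+2}}$. Given $\varepsilon<\varepsilon_0$ and a ball $B(x_0,\varepsilon)$, take the first $N$ with $\frac{1}{2^N}\leqslant\varepsilon<\frac{1}{2^{N-1}}$ and put $y_0=x_0\restriction(N+1)\frown 1^\infty$. The inclusion $B(y_0,\varepsilon\cdot\alpha)\subseteq B(x_0,\varepsilon)$ is proved exactly as before via the triangle inequality, using $\alpha\leqslant\frac12$. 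For the other half, if $a\in B(y_0,\varepsilon\cdot\alpha)$ then $\rho(y_0,a)<\alpha\varepsilon<\frac{1}{2^{N+p+1}}$, so $a$ agrees with $y_0$ on the first $N+1+p$ coordinates; in particular $a\restriction[N+1,N+p]=1\restriction[N+1,N+p]$, which is a block of length $p$ contained in $\hat a$. Hence $a\notin F_p$, giving $B(y_0,\varepsilon\cdot\alpha)\subseteq B(x_0,\varepsilon)\setminus F_p$ as required.

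There is essentially no obstacle here: the van der Waerden argument already inserts a long run of $1$'s, and a run of $p+1$ consecutive $1$'s is exactly a witness to thickness-on-an-interval, so the same $y_0$ works with the same constant $\alpha=\frac{1}{2^{p+2}}$. The only point requiring a moment's care is bookkeeping the indices: one must insert $1$'s on the block $[N+1,N+p]$ (or a slightly longer block, adjusting $\alpha$ accordingly) and check that the radius $\varepsilon\alpha$ forces agreement far enough out to capture this block; this is the same arithmetic as in Theorem \ref{thm:W-is-sigma-porous} with $n$ replaced by $p$. Thus \ThickPlus $=\bigcup_p F_p$ exhibits \ThickPlus as a countable union of porous (indeed closed porous) sets, so it is $\sigma$-porous.
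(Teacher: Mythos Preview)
Your proposal is correct and takes essentially the same approach as the paper: decompose \ThickPlus\ into the closed sets $F_p$ (the paper's $Thick^+_N$, indexed by the length of the forbidden block) and show each is porous by appending a run of $1$'s; the paper's write-up is shorter only because it invokes the combinatorial reformulation of porosity (Definition~\ref{very-porous-warunek-kombinatoryczny}), observing that $[t\frown 1^N]\cap Thick^+_N=\emptyset$ for every $t$, rather than redoing the metric ball estimates. One minor bookkeeping slip: your $F_p$ forbids blocks $\{m,\ldots,m+p\}$ of $p+1$ consecutive integers, but the block $[N+1,N+p]$ you exhibit in $\hat a$ has only $p$ elements, so either take $\alpha=2^{-(p+3)}$ and use $[N+1,N+p+1]$, or reindex $F_p$ by block length --- exactly the adjustment you anticipate in your closing paragraph.
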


\begin{proof}
	Let us start by decomposing \ThickPlus into a countable family:
	$$Thick^+ = \bigcup_{N=1}^\infty Thick^+_N,$$
	where
	$$Thick^+_N = \{ A\subseteq \omega \colon 	\forall_{\substack{I\subset \omega\\ |I| = N}} I \not\subseteq A \}. $$
	For each $N$ we have that $Thick^+_N$ is a closed set. It is easy to see that $Thick^+_N$ is porous. This follows from the observation that for each $t\in 2^{<\omega}$
	$$ [ t\frown 1^N ] \cap Thick^+_N = \emptyset. $$	
\end{proof}

\ThickPlus being $\sigma$-porous yields strong consequences. As $\sigma$-porosity is a notion of smallness, it is obviously hereditary: every subset of a $\sigma$-porous set is also $\sigma$-porous.
\begin{corollary}\label{cor:ideal-subset-of-thick-plus-is-sigma-porous}
For any ideal $\cI \subseteq$ \ThickPlus, $\cI$ is $\sigma$-porous.
\end{corollary}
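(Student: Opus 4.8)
The plan is short: this is an immediate consequence of Theorem~\ref{thm:Thick+-is-sigma-porous} together with the fact that porosity is a hereditary property. First I would recall that Theorem~\ref{thm:Thick+-is-sigma-porous} provides a concrete decomposition $Thick^+ = \bigcup_{N=1}^\infty Thick^+_N$ in which every $Thick^+_N$ is porous. Then, given an ideal $\cI \subseteq$ \ThickPlus, I would write $\cI = \bigcup_{N=1}^\infty (\cI \cap Thick^+_N)$ and note that each $\cI \cap Thick^+_N$ is a subset of the porous set $Thick^+_N$.

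The remaining point is that any subset of a porous set is porous. This is transparent from the combinatorial reformulation in Definition~\ref{very-porous-warunek-kombinatoryczny}: if $E' \subseteq E$ and $[s]\cap E = \emptyset$, then a fortiori $[s]\cap E' = \emptyset$, so the witnesses $M$, $K$ (and, for each $t$, the extension $s$) that work for $E$ work verbatim for $E'$. Hence each $\cI \cap Thick^+_N$ is porous, $\cI$ is a countable union of porous sets, and therefore $\cI$ is $\sigma$-porous.

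There is essentially no obstacle here. I would only flag two cosmetic matters. First, the argument never actually uses that $\cI$ is an ideal: it applies to an arbitrary subset of \ThickPlus, the ideal hypothesis being retained only because that is how the corollary will be used (e.g. to $\mathcal{W}$, $\mathcal{B}$, and $\mathcal{I}_u$, each of which was shown to be contained in \ThickPlus). Second, one should invoke hereditariness of \emph{porosity} rather than of $\sigma$-porosity, which is why it is cleanest to pass through the explicit decomposition of \ThickPlus from Theorem~\ref{thm:Thick+-is-sigma-porous} instead of appealing to the $\sigma$-ideal property of the class of $\sigma$-porous sets recorded in the preliminaries.
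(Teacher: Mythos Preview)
Your proposal is correct and matches the paper's approach: the paper simply remarks that $\sigma$-porosity is hereditary (every subset of a $\sigma$-porous set is $\sigma$-porous) and invokes Theorem~\ref{thm:Thick+-is-sigma-porous}. Your version is a slightly more explicit unpacking of the same idea via the decomposition into the porous sets $Thick^+_N$, and your observation that the ideal hypothesis is not actually needed is accurate.
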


\begin{corollary}
	The following ideals are $\sigma$-porous: $\mathcal{W}$, $\mathcal{B}$, $\mathcal{I}_u$.
\end{corollary}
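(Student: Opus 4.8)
The plan is to derive the final corollary directly from Corollary \ref{cor:ideal-subset-of-thick-plus-is-sigma-porous} together with the three inclusions into \ThickPlus that were recorded earlier in the Preliminaries. First I would recall that the excerpt already establishes $\mathcal{W} \subset \mathcal{B} \subset$ \ThickPlus (via the facts that every piecewise syndetic set contains arbitrarily long arithmetic progressions and that every thick set is piecewise syndetic, combined with contraposition), and separately that $\mathcal{I}_u \subset$ \ThickPlus (the last sentence of the definition of $\mathcal{I}_u$). So all three ideals $\mathcal{W}$, $\mathcal{B}$, $\mathcal{I}_u$ are subsets of \ThickPlus.

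Next I would invoke Theorem \ref{thm:Thick+-is-sigma-porous}, which says \ThickPlus is $\sigma$-porous, and then Corollary \ref{cor:ideal-subset-of-thick-plus-is-sigma-porous}, which upgrades this to the statement that \emph{any} ideal contained in \ThickPlus is $\sigma$-porous (this is immediate because $\sigma$-porosity is hereditary: a subset of a countable union of porous sets is again a countable union of porous sets, intersecting each piece with the subset). Applying this corollary to each of $\mathcal{W}$, $\mathcal{B}$, and $\mathcal{I}_u$ in turn yields the claim.

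There is essentially no obstacle here — the corollary is a one-line consequence of results already in hand. The only point requiring a moment's care is making sure the three inclusions into \ThickPlus have genuinely been justified rather than merely asserted; since the excerpt does supply the justifications (Brown's lemma and the van der Waerden-type property of piecewise syndetic sets for the chain $\mathcal{W} \subset \mathcal{B} \subset$ \ThickPlus, and the direct remark for $\mathcal{I}_u$), nothing further is needed. A complete proof is therefore just the following.

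\begin{proof}
As noted in the Preliminaries, $\mathcal{W} \subset \mathcal{B} \subset$ \ThickPlus{} and $\mathcal{I}_u \subset$ \ThickPlus. Each of $\mathcal{W}$, $\mathcal{B}$, $\mathcal{I}_u$ is thus an ideal contained in \ThickPlus, so by Corollary \ref{cor:ideal-subset-of-thick-plus-is-sigma-porous} each of them is $\sigma$-porous.
\end{proof}
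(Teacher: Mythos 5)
Your proof is correct and coincides exactly with the paper's reasoning: the paper states this corollary immediately after Corollary \ref{cor:ideal-subset-of-thick-plus-is-sigma-porous}, relying on the already-recorded inclusions $\mathcal{W} \subset \mathcal{B} \subset$ \ThickPlus{} and $\mathcal{I}_u \subset$ \ThickPlus. Nothing to add.
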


Notice that we alse have the reverse implication (this was suggested by Jacek Tryba):
\begin{theorem}\label{sugestia-Jacka-Tryby}
  For any ideal $\cI$ if $\cI$ is $\sigma$-porous then
$\cI \subseteq$ \ThickPlus.
\end{theorem}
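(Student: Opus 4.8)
The plan is to establish the contrapositive: if an ideal $\cI$ is \emph{not} contained in \ThickPlus, then it is not $\sigma$-porous. So assume $\cI\not\subseteq$ \ThickPlus, i.e.\ some thick set $T$ belongs to $\cI$. Since ideals are closed under subsets, every subset of $T$ lies in $\cI$, and it therefore suffices to show: for every sequence $(F_n)_{n\in\omega}$ of porous subsets of $2^\omega$ there is a point $z\in 2^\omega$ whose support $\hat z=\{k:z(k)=1\}$ is contained in $T$ (so that $z\in\cI$) but which lies in no $F_n$. This contradicts any hypothetical decomposition $\cI=\bigcup_n F_n$ into porous pieces.

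To produce such a $z$ I would run a fusion argument based on the combinatorial form of porosity (Definition~\ref{very-porous-warunek-kombinatoryczny}): fix, for each $n$, constants $M_n,K_n\in\omega$ witnessing that $F_n$ is porous. First I would record the elementary observation that deleting a finite set from a thick set leaves it thick, so that $T$ contains arbitrarily long intervals starting arbitrarily far out --- concretely, for any $m$ and $p$ there is a block $\{c,c+1,\dots,c+p\}\subseteq T$ with $c\ge m$, obtained by applying the definition of thickness with parameter $m+p$ and discarding the coordinates below $m$. Then I would build an increasing chain $\emptyset=s_{-1}\subseteq s_0\subseteq s_1\subseteq\cdots$ in $2^{<\omega}$, with $|s_n|\to\infty$, keeping the invariant that every coordinate on which $s_n$ equals $1$ belongs to $T$. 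At stage $n$: choose $c_n>\max(|s_{n-1}|,M_n)$ with $\{c_n,c_n+1,\dots,c_n+K_n\}\subseteq T$; put $t_n=s_{n-1}\frown 0^{\,c_n-|s_{n-1}|}$, so $|t_n|=c_n>M_n$ and $t_n$ has no new $1$'s; apply porosity of $F_n$ to get $s_n\supseteq t_n$ with $|s_n|\le|t_n|+K_n=c_n+K_n$ and $[s_n]\cap F_n=\emptyset$. The coordinates on which $s_n$ extends $t_n$ all lie in $[c_n,c_n+K_n)\subseteq T$, so the invariant is preserved, and $|s_n|\ge c_n>|s_{n-1}|$. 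Setting $z=\bigcup_n s_n$, the invariant gives $\hat z\subseteq T$, hence $z\in\cI$, while $z\in[s_n]$ and $[s_n]\cap F_n=\emptyset$ for every $n$ give $z\notin\bigcup_n F_n=\cI$, the desired contradiction.

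The step I expect to be the crux is the coordination between the porosity constant $K_n$ and the set $T$. A naive application of the porosity condition to an arbitrary long node $t$ lets one avoid $F_n$ by flipping up to $K_n$ coordinates, but those coordinates could land outside $T$ and eject $z$ from $\cI$. Padding with zeros up to the start $c_n$ of a $T$-block longer than $K_n$ is exactly what prevents this, since the ensuing extension of length at most $K_n$ is then confined to that block. Everything else --- monotonicity of the $s_n$, $|s_n|\to\infty$, and the two membership statements about $z$ --- is routine bookkeeping, and no facts about porous sets beyond Definition~\ref{very-porous-warunek-kombinatoryczny} are used.
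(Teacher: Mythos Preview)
Your argument is correct and follows essentially the same route as the paper: both prove the contrapositive, fix a thick set in $\cI$, use the combinatorial reformulation of porosity (Definition~\ref{very-porous-warunek-kombinatoryczny}) to obtain constants $M_n,K_n$, and then build an increasing chain of finite sequences by padding with zeros until the next $K_n$ coordinates sit inside a long interval of the thick set before invoking porosity. The paper phrases the same idea via an auxiliary perfect set $P=\{x:x\restriction_{\omega\setminus A}\equiv 0\}$ and the notion of ``$k$-tight'', but your direct construction of a single point $z$ with $\hat z\subseteq T$ is an equivalent (and slightly leaner) packaging of the same mechanism.
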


\begin{proof}
Let $\cI \subseteq 2^\omega$ be such ideal that it is not included in \ThickPlus. Pick $A \in \cI \setminus {\mathit Thick}$. It means that $A$ contains intervals of an arbitrary length. Since $\cI$ is an ideal, we can assume without the loss of generality that $A = \bigcup_{n} I_n$, where $(I_n)$ are pairwise disjoint intervals, $\max I_n < \min I_{n+1}$ and $|I_n| \geq n$. Define
\[
P = \lbrace x \in 2^\omega \colon x \restriction_{\omega \setminus A} =
\underline{0} \restriction_{\omega \setminus A}\rbrace,
\]
where $\underline{0}$ is a sequence constantly equal to 0.

Notice that $P \subseteq \cI$. Therefore it is sufficient to show that $P$ is not $\sigma$-porous. Let us assume otherwise: $P = \bigcup_{n\in\omega} E_n$, where $(E_n)$ is porous for each $n$. For every set $E_n$ let $M_n, K_n$ be such natural numbers that exist for $E_n$ due to the condition from Definition \ref{very-porous-warunek-kombinatoryczny}.

Let us outline a few simple observations:
\begin{enumerate}[label=(\arabic*)]
\item For any $t \in 2^{<\omega}$ we have $[t] \cap P \not=\emptyset$
  iff $\forall_{k < |t|} k \not\in A \implies t(k) = 0$.
  
\item If $[t] \cap P \not=\emptyset$ then
  for each $k$, $[t \frown \langle 0^k \rangle] \cap P \not= \emptyset$.

  This follows immediately from the previous observation.

\item If we have $t \in 2^{<\omega}$ and $k\in\omega$, and any
  extension $t$ to the sequence $s$ of length $|s| + k$ is such that
  $[s] \cap P \not= \emptyset$ then we say that the set $P$ is \underline{k-tight} in the basic set $[t]$

\item\label{observation4}
   For any $t \in 2^{<\omega}$ such that $[t] \cap P \not=\emptyset$
   and for any $k\in\omega$ we can extend
   $t$ to an arbitrarily long $s$ such that the set $P$ is k-tight in the basic set $[s]$.
   
   Indeed, for a given $t$ we can add a sufficiently long sequence of zeros, so that the new sequence, $s$, fulfills $|s| = \min I_{k_1}$ for some $k_1 > k$
   (notice that $|s| - 1$ is the last index of the sequence $s$). Such $s$ meets the requirements, since $|I_{k_1}| \geq k_1 > k$.
\end{enumerate}

We construct the sequence $t_k \in 2^{<\omega}$ by induction, so that $[t_k] \cap P \not= \emptyset$. We extend $t_{k-1}$ with zeros so that the length of this sequence was greater than $M_k$ and then, using observation \ref{observation4} we extend it towards the sequence $t_k^\prime$ so that $P$ is $K_k$-tight in the basic set $[t_k^\prime]$.

Then we use the assumption regarding the set $E_k$ and extend $t_k^\prime$ towards such sequence $t_k$ that $[t_k] \cap P \not= \emptyset$ and $[t_k] \cap E_k = \emptyset$. Define: $x_0 = \bigcup_{k} t_k$. Due to the closeness of the set $P$ we obtain $x_0 \in P$ but $x_0 \not\in \bigcup_{k} E_k$, which is a contradiction.
\end{proof}

\section{Classical porous sets}

By $\MaxLen(A)$ let us denote the maximum length of an open interval $J \subseteq A$ (and zero if no such interval exists).

\begin{definition}
A set $E\subset \mathbb{R}$ is upper porous if for every $x \in E$ we have
$$\lim\sup_{\varepsilon\to 0^+} \frac{\MaxLen((x - \varepsilon, x + \varepsilon) \setminus E)}{\varepsilon} > 0.$$
\end{definition}

We will translate this condition onto the Cantor space $2^\omega$. An interval of the form $(x - \varepsilon, x + \varepsilon)$ will be replaced by a basic set $[x\restriction_n]$. Analogously, $\MaxLen(Z)$ will denote the maximum diameter of a basic set contained in $Z \subseteq 2^\omega$ (or 0 if $Z$ has an empty interior). That is, for ${\mathit int}(Z) \not=\emptyset$ we have
\[
  \MaxLen(Z) = \max \lbrace 2^{-|s|} \colon [s] \subseteq Z, s \in 2^{<\omega} \rbrace.
\]
Therefore the definiton of upper porosity of a set $A$ in the Cantor space can be stated as follows: 
\begin{definition}
	A set $A \subset 2^\omega $ is upper porous if for every $x\in A$
\begin{equation}\label{klasyczna-porowatosc-z-limsup-na-Cantor}
	\limsup_{n\to\infty} \frac{\MaxLen([x\restriction_n] \setminus A)}{2^{-n}} > 0.
\end{equation}
\end{definition}
The expression $\limsup_{n\to\infty} a_n > 0$ means exactly the following: 
$$\exists_{\eta > 0} \exists_{n_k} \forall_{k \in \nnatural} a_{n_k} \geq \eta$$
where $n_k$ is a subsequence. Without the loss of generality we may assume that $\eta$ is of the form $2^{-K}$. Therefore we can reconstruct the condition (\ref{klasyczna-porowatosc-z-limsup-na-Cantor})
into
\begin{equation}\label{klasyczna-porowatosc-bez-limsup-na-Cantor}
  \forall_{x \in A} \exists_{K \in \nnatural} \exists_{n_k} \forall_{k\in\nnatural} \MaxLen([x\restriction_{n_k}]\setminus A) 
  \geq {2^{-(n_k + K)}};
\end{equation}
which is equivalent to
\begin{equation}\label{klasyczna-porowatosc-bez-limsup-na-Cantor-2}
  \forall_{x \in A} \exists_{K \in \nnatural} \exists_{n_k} \forall_{k\in\nnatural}
  \exists_{s\in 2^{<\omega}} [s] \subseteq [x\restriction_{n_k}]\setminus A 
  \wedge {2^{|s|} \geq 2^{-(n_k + K)}}.
\end{equation}

Since  $[s] \subseteq [x\restriction_n]$, then $x\restriction_n \subseteq s$,
therefore we obtain the final condition:
\begin{equation}\label{klasyczna-porowatosc-na-Cantor}
  \forall_{x \in A} \exists_{K \in \nnatural} \exists_{n}^\infty 
  \exists_{s\in 2^{<\omega}} x\restriction_n \subseteq s \wedge [s] \cap A = \emptyset
  \wedge |s| \leq n + K.
\end{equation}
Therefore by transitiong it onto the Cantor space, the definition of upper porosity obtains the following final form.

\begin{definition}\label{klasyczna-porowatosc-warunek-kombinatoryczny}
A set $E\subseteq 2^\omega$ is upper porous if
  $$\forall_{x_0 \in E} \exists_{K\in \omega} \exists_{n}^\infty
  \exists_{s \in 2^{<\omega}} |s| \leq n + K \wedge s \supseteq x_0 \restriction_n \wedge [s] \cap E = \emptyset.$$
\end{definition}

It is known that the concepts of porosity and upper porosity are different. For completeness, we will name an example of a perfect set in the Cantor space which illustrates the difference between these two definitions.

\begin{example}
Let $A = \lbrace n^2, n^2 + 1 \colon n = 1,2,3,\ldots\rbrace$ and put $P = \lbrace x \in 2^\omega \colon x \restriction_A \equiv\underline{0}\restriction_A \rbrace$.

We will show that $P$ is upper porous and not $\sigma$-porous (and so obviously not porous). 

In order to show upper porosity, pick any $x_0 \in P$. Then for every $n \in \omega$ we define $s\in 2^{<\omega}$ as $s = x_0 \restriction_n \frown \langle 1 \rangle$. Then $|s| = n + 1$ and $[s] \cap P = \emptyset$ (since $s(n) = 1$). Therefore the combinatorial condition (\ref{klasyczna-porowatosc-warunek-kombinatoryczny}) is met.

We proceed to showing that $P$ is not $\sigma$-porous. Define the following ideal:
\[
\cI = \lbrace Z \cup F \colon Z \subseteq \omega\setminus A \wedge
F \in {\mathsf Fin}\rbrace.
\]
It is easy to verify that $\cI = P \oplus {\mathsf Fin}$, where $\cA \oplus \cB = \lbrace A \oplus B \colon A \in \cA, B \in \cB\rbrace$,
and $A \oplus B$ denotes the sum modulo $2$ in $2^\omega$. Notice that $\cI$ is not included in \ThickPlus, since $\omega \setminus \lbrace n^2, n^2 + 1\colon n \in \nnatural\rbrace \in \cI$ and such set contains intervals of an arbitrary length.

It follows from Theorem \ref{sugestia-Jacka-Tryby} that the set $P \oplus {\mathsf Fin}$ is not $\sigma$-porous. Since this set is a union of countably many shifts of the set $P$ (and shifting the set preserves its porosity), therefore $P$ also cannot be $\sigma$-porous.

\end{example}

Let us end with the following:
\begin{problem}
Find a suitable 
(like Corollary \ref{cor:ideal-subset-of-thick-plus-is-sigma-porous} and Theorem \ref{sugestia-Jacka-Tryby})
characterization of all ideals which are $\sigma$-upper porous.
\end{problem}

\bibliography{biblio_poro}
\bibliographystyle{Abbrv}

\end{document}